\documentclass[10pt,draftcls,onecolumn]{IEEEtran}

\usepackage{amsmath,amsfonts,balance}
\usepackage{graphicx}
\usepackage{hyperref}
\newcommand{\disp}{\displaystyle}

\newcommand{\bed}{\begin{displaymath}}
\newcommand{\eed}{\end{displaymath}}
\newcommand{\bea}{\bed\begin{array}{rl}}
\newcommand{\eea}{\end{array}\eed}
\newcommand{\ad}{&\!\!\!\disp}
\newcommand{\aad}{&\disp}
\newcommand{\barray}{\begin{array}{ll}}
\newcommand{\earray}{\end{array}}

\newif\ifblog
\newif\iftex
\blogfalse
\textrue

\newcommand{\mv}{Mean-Field }

\def\em{\it}
\def\emph#1{\textit{#1}}




\def\E{{\mathbb E}}

\newcommand{\wdt}{\widetilde}


\newtheorem{theorem}{Theorem}

\newtheorem{definition}[theorem]{Definition}

\newtheorem{proposition}[theorem]{Proposition}
\newtheorem{example}{Example}

\newenvironment{proof}{\noindent {\sc Proof:}}{$\Box$} 


\begin{document}

\title{Solving A Class of Mean-Field LQG Problems}

\author{Yun Li, Qingshuo Song, Fuke Wu,  George Yin
\thanks{Y. Li is with Huazhong University of Science and Technology, (li\_yun@hust.edu.cn).
The research of this author was supported in part by the National Natural Science Foundation of China (Grant No.61873320).}
\thanks{Q. Song is with
Worcester Polytechnic Institute, and
City University of Hong Kong,
(qsong@wpi.edu). The research of this author was
supported in part  by the RGC of Hong Kong CityU (11201518).}
\thanks{F. Wu is with
Huazhong University of Science and Technology,
(wufuke@hust.edu.cn).
The research of this author was supported in part by the National Natural Science Foundation of China (Grant No.61873320).}
\thanks{G. Yin is
with the
University of Connecticut, Storrs, CT 06269-1009, USA, (gyin@uconn.edu). The research of this author was
supported in part by the Air Force Office of Scientific Research under grant FA9550-18-1-0268.}
\thanks{The authors are grateful to
Professor Peter Caines for several stimulating
discussions on this work
during his visit to Hong Kong City University
in summer 2019.}
}

\markboth{IEEE Transactions on Automatic Control}{Li, Song, Wu, Yin, Mean-Field LQG Problems}

\maketitle

\begin{abstract}
In this work,
we  study a class of mean-field linear quadratic Gaussian (LQG) problems. Under suitable conditions,  explicit solutions of the distribution-dependent optimal control problems are obtained.
Riccati systems are derived by directly solving the associated master equations.
Some extensions on controls with partial observations are also considered.
\end{abstract}

\begin{IEEEkeywords}
Controlled diffusion,
LQG control, McKean-Vlasov equation, partially observable system. \end{IEEEkeywords}

\IEEEpeerreviewmaketitle

\section{Introduction}

We consider a one-dimensional LQG problem.
Suppose the controlled process
$X_t \in {\mathbb R}$ is the solution of
a stochastic differential equation
\begin{equation}
\label{eq:Xt}
dX_t =(A_t X_t + B_t u_t) dt + \sigma_t dW_t,
\end{equation}
where $A_t$, $B_t$, and $\sigma_t$ are suitable functions of $t$, $W_t$ is a standard real-valued Brownian motion, and $u_t$ is the control.
The objective is to minimize an expected cost function of the form
$$J(x,u )= \E_x \Big[\int^T_0 (R_t X^2_t+ Q_t u^2_t) dt\Big]
+ \hat g(X_T),$$
where $R_tX^2_t + Q_t u^2_t$ is the running cost rate
and $\hat g(X_T)$ is the terminal cost.

If the terminal cost is $\hat g(X_T) = \mathbb E[X_T^2]$, it is
 the classical LQG problem; see, for example,
Fleming and Rishel \cite{FR75} and Yong and Zhou \cite{YZ99}, among others.
There is a vast
literature for LQG control problems
 under complete observations as well as partial observations; see for example,
\cite{FR75, DMS12, HR92,YZ99} and
related works in \cite{Cai18, HCM12, NH12, NNY19}, among others.
It is now standard that the associated  Hamilton-Jacobi-Bellman (HJB) equations can be solved by the associated
Riccati equations
provided if the cost function is quadratic in the states and controls.

In this work, we study the control problem with terminal cost
given by a function not of the state but
 the distribution $\mu_T$ of the terminal state $X_T$. For instance,
consider
$\hat g(X_T) = g(\mu_T) =  (\mathbb E[X_T])^2$. Then it does not belong to
traditional setup of LQG problem. As noted in \cite{Yon13} and \cite{BKM16},
this problem belongs to the class of time-inconsistent
 control problems.
 Indeed, in such a problem,  the
 dynamic programming principle (DPP) is not applicable.

An extensive literature is devoted to time-inconsistent control problems; see \cite{BMZ14, Yon15,Yon12, BKM16, ZY03} and the references therein.
It is worth 
mentioning 
because of no
time-consistent optimal controls,
the focus in the above references is to find
``locally optimal" time-consistent controls,
which is referred to as ``equilibrium solution".

We emphasize that the optimal solutions are strictly different from the equilibrium solution discussed in the aforementioned references.
For the optimal solution,
\cite{Yon13} provides Riccati system based on decoupling technique for FBSDE; see also Example 1.2 of \cite{Yon15}, Section 6.7 of \cite{CD18I}, \cite{Pha16},
and \cite{Zha19}.


In contract to
the aforementioned works, our
 aim is to obtain explicit solutions by
solving its associated master equation directly
in Section \ref{s:lqg}. The solution will provide us with insight on the dependence of the solution on the associated distribution.
The key is to identify the time-inconsistent
problem as a LQ control problem in a suitable sense, where linear and quadratic structure
is referred to the functions with domains being suitable measure spaces. Similar to the
approach of traditional LQG, we also guess the solution of the
master equation as a quadratic function of the associate measure.
This approach successfully reduces originally infinite dimensional master equation to a finite dimensional Riccati system after explicit computations
using L-derivatives; see
Section \ref{sec:form} and \cite{CDLL19, CD18I} for a brief introduction of L-derivatives. Using our new approach,
Example \ref{ex:02} in this paper 
recovers Example 1.2 of \cite{Yon15}. As a result, the optimal
trajectory is a Gaussian process, which justifies the underlying LQ
problem being linear quadratic Gaussian.

Section \ref{s:cn}  is concerned
with an extension of mean-field LQG
in which the system is
only partially observable.
The optimal control can be obtained by a separation principle
to covert the partially observed system to a fully observed one.
Finally, we 
conclude the paper with a brief discussion in Section \ref{s:sum}.

\section{Preliminaries}\label{sec:form}

\subsection{Polynomials and Derivatives on Measure Space}
\label{s:derivative}

Suppose $\mu$ is a distribution
on Borel sets $\mathcal B(\mathbb R)$ and
$f: \mathbb R\mapsto \mathbb R$ is a real-valued function.
We write $$\langle f, \mu \rangle := \int_{\mathbb R} f(x) \mu(dx),$$
if the integral exists.
We denote by $$[\mu]_{m} := \langle x^{m}, \mu\rangle$$ the $m$th moment for any $m\ge 1$.
If a distribution $\mu$ has a finite $m$th moment $[\mu]_{m}$,
then we write it as $\mu \in \mathcal P_{m}$.
For instance,  for any $x\in \mathbb R$,
a Dirac measure $\delta_{x}$
belongs to $\mathcal P_{m}$ for any $m\ge 1$, since
$[\delta_{x}]_{m} =  x^{m}$ holds.

Polynomials on $\mathcal P_2$ are defined as a linear combination of the monomials defined in this below.
\begin{enumerate}
\item
A 1-monomial is given by a function in the form of
$$f(\mu) = \langle\phi, \mu\rangle$$
for some appropriate
function $\phi: \mathbb R\mapsto \mathbb R$.
\item An  $n$-monomial is a product of $n$ many 1-monomials,
$$f(\mu) = \Pi_{i=1}^n \langle \phi_i, \mu \rangle,$$
for some coefficients  $\phi_i$.
\end{enumerate}

We
use
a notion of L-derivative on the functions of probability measures
in a lifted space.
We summarize below a few useful results to be used in this paper.
\begin{enumerate}
\item
The derivative of 1-monomial becomes $\mu$-invariant,
$$\partial_\mu \langle \phi, \mu \rangle = \phi'(x).$$
\item Chain rule and product rules can be used as usual, which  yields that the derivative of
$n$-monomial becomes $(n-1)$-monomial. For instance, we have
$$\partial_\mu ([\mu]_m)^n = n [\mu]_m^{n-1} m x^{m-1}.$$
\end{enumerate}
Note that the  notion of L-derivative $\partial_\mu f$ is taken from \cite{CD18I}, which  is equivalent to the intrinsic derivative $D_\mu f$ introduced by
\cite{CDLL19}, that is,
$$\partial_\mu f(\mu, x) = D_\mu f (\mu, x) = \partial_x \frac{\delta f}{\delta \mu}(\mu, x).
$$

\subsection{Verification Theorem}
Let $(\Omega, \mathcal F, \mathbb P, \mathbb F)$ be a complete filtered probability space satisfying the usual conditions,
where
${\mathbb F}= (\mathcal F_{t})_{t\ge 0}$ is the filtration
on which there exists an $\mathbb F$-adapted Brownian motion $W$.
Given a controlled SDE
\begin{equation}
 \label{eq:con1}
X_{t} = x + \int_{0}^{t} b( s, X_{s}, u_{s}) ds + \int_{0}^{t} \sigma_{s} dW_{s},
\end{equation}
we denote by $\mu_{t}$  the probability law of $X_{t}$ and consider the cost function
\begin{equation}
 \label{eq:con2}
 J(u
 ) =
\mathbb E
 \Big[ \int_{0}^{T} \ell( t, X_{t}, u_{t}) dt \Big] + g(\mu_{T}).
\end{equation}
In the above, $u $ is an $\mathcal F_{t}$ progressively measurable control process,  $\ell(\cdot, \cdot, \cdot)$ is the running cost function, and $g(\cdot)$ is the terminal cost.
Our objective is to minimize the cost function $J$ over an admissible control space $\mathcal U$, i.e.,
\begin{equation}
 \label{eq:con3}
V^{*} =
J(u^{*}
) \le
J(u ),
\ \forall u  \in \mathcal U.
\end{equation}

\begin{definition}\label{def:fb}
 A random process
$u: [0, T]\times \Omega \mapsto \mathbb R$ is  said to be
admissible
if $u$ together with $(X, J)$ satisfies
\eqref{eq:con1}-\eqref{eq:con2}  and
\begin{equation}
 \label{eq:con31}
u_{t} = a (t, \mu_{t}, X_{t}) \hbox{ for all } t\in [0, T]
\end{equation}
for some controller $a$ in the feedback form of $(t, \mu_t, X_t)$.
The collection of all such admissible controls is denoted by
 $\mathcal U$.
\end{definition}

Since the terminal cost is a function of a measure,
we lift the optimal value $V^*$ to a value function
of the form $V(t, \mu)$ such that
$V^* = V(0, \delta_x)$ accordingly.
The verification theorem says that
under sufficient regularity,
the value function $V(t, \mu)$
solves the following master equation
 \begin{equation}
 \label{eq:hjb02}
 \barray \ad
\inf_{a \in \mathcal M(\mathbb R)}\langle
H(t, \cdot, \mu, v, a(\cdot)), \mu
\rangle
\\
\aad \ \quad \quad \quad\quad
+ \frac 1 2 \sigma^{2}_t
\langle
\partial_{x\mu} v (t, \mu, \cdot), \mu
\rangle
+ \partial_{t} v (t, \mu) = 0,
\earray
\end{equation}
with the terminal condition
\begin{equation}
 \label{eq:tc02}
 v (T, \mu) = g(\mu),
\end{equation}
where $\mathcal M(D)$ is the collections of all real-valued measurable mappings on a  metric space $D$, and $H$ is given as
$$H(t, x, \mu, v, a) =
b(t, x, a)
\partial_{\mu} v (t, \mu, x) +
\ell(t, x, a).$$


Throughout  the rest of the paper, we use the convention
$f(t, \mu)(x) = f(t, \mu, x)$.
To proceed, we say a function
$f: [0, T] \times \mathcal P_{2} \mapsto \mathbb R$ is partial
$\mathcal C^{1,2}$
 if there exists continuous derivatives
$\partial_t f, \partial_\mu f, \partial_{x\mu} f: [0, T] \times \mathcal P_{2} \times \mathbb R \mapsto \mathbb R$. For convenience, we denote by $\mathcal C_I$
all partial $\mathcal C^{1,2}$ functions $f$ satisfying a growth condition
$\langle |\partial_{x \mu} f|^2, \mu\rangle \le C (1 +  [\mu]_2^m)$
for some $C, m>0$.
Recalling the chain rule \cite[Proposition 5.102]{CD18I},
a function $f \in \mathcal C_I$  satisfies
$$
\begin{array}
{ll}
f(t, \mu_{t}) =
f(0, \mu_{0}) +
\int_{0}^{t} \mathbb E[\partial_{\mu} f(s, \mu_{s}, X_{s}) b(s, X_{s}, u_{s})] ds\\
\hspace{.4in} + \frac 1 2 \int_{0}^{t} \mathbb E
[\sigma_{s}^{2} \partial_{x\mu} f(s, \mu_{s}, X_{s})] ds +
\int_{0}^{t} \partial_{t} f(s, \mu_{s}) ds.
\end{array}
$$

\begin{proposition}
 \label{p:veri01}
 Let $b$ and $\ell$ be Lipstchitz continuous in $(t, x)$. Suppose there exists a solution
 $v \in \mathcal C_I$
 of the master equation \eqref{eq:hjb02}-\eqref{eq:tc02} and
 a feedback form
 $a^*: (0, T) \times \mathcal P_{2} \times \mathbb R \mapsto \mathbb R$ satisfying the optimality condition
\begin{equation}
 \label{eq:opt01}
H(t, x, \mu, v, a^{*}(t, \mu, x)) =
 \inf_{a \in \mathbb R}
H(t, x, \mu, v, a),
\end{equation}
for all $(t, \mu, x) \in (0, T) \times \mathcal P_{2} \times \mathbb R$.
In addition, if there exists an optimal pair
$(X^{*}, u
^{*})$
of state trajectory and admissible control
satisfying
$$
u^{*}_{t} = a^{*}(t, \mu^{*}_{t}, X^{*}_{t}),
$$
then the optimal value is
$$V^{*} = v (0, \delta_{x}).$$
\end{proposition}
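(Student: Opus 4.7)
The plan is the standard verification argument adapted to the measure-dependent setting. I would fix an arbitrary admissible pair $(X, u) \in \mathcal U$ with corresponding marginal flow $\mu_t = \mathrm{Law}(X_t)$ and $\mu_0 = \delta_x$, and apply to $v(t, \mu_t)$ the chain rule displayed just before the proposition (the specialization of \cite[Proposition 5.102]{CD18I} to functions in $\mathcal C_I$). This yields the identity
$$v(T, \mu_T) - v(0, \delta_x) = \int_0^T \Big\{\partial_t v(s, \mu_s) + \mathbb E[\partial_\mu v(s, \mu_s, X_s) b(s, X_s, u_s)] + \tfrac12 \sigma_s^2 \mathbb E[\partial_{x\mu} v(s, \mu_s, X_s)]\Big\} ds.$$

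The next step is to recognize the integrand in terms of the Hamiltonian $H$ and then invoke the master equation. Adding and subtracting $\mathbb E[\ell(s, X_s, u_s)]$ and using $u_s = a(s, \mu_s, X_s)$ from Definition \ref{def:fb}, the inner expectation rewrites as
$$\langle H(s, \cdot, \mu_s, v, a(s, \mu_s, \cdot)), \mu_s \rangle - \mathbb E[\ell(s, X_s, u_s)].$$
The pointwise infimum condition \eqref{eq:opt01}, integrated against $\mu_s$, gives
$$\langle H(s, \cdot, \mu_s, v, a^*(s, \mu_s, \cdot)), \mu_s\rangle \le \langle H(s, \cdot, \mu_s, v, a(s, \mu_s, \cdot)), \mu_s \rangle,$$
with equality when $a = a^*$. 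Combining with the master equation \eqref{eq:hjb02} evaluated at $\mu_s$, the bracketed integrand above is $\ge -\mathbb E[\ell(s, X_s, u_s)]$, with equality along $(X^*, u^*)$.

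Using the terminal condition $v(T, \mu_T) = g(\mu_T)$ from \eqref{eq:tc02}, the chain-rule identity reduces to
$$v(0, \delta_x) \le g(\mu_T) + \int_0^T \mathbb E[\ell(s, X_s, u_s)] ds = J(u),$$
with equality for $(X^*, u^*)$. Taking the infimum over $u \in \mathcal U$ yields $v(0, \delta_x) \le V^*$, while the equality case supplies $V^* = J(u^*) = v(0, \delta_x)$.

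The main technical obstacle is justifying the application of the chain rule to $v(t, \mu_t)$ along an arbitrary admissible trajectory. This requires $(s, \omega)$-integrability of $\partial_\mu v(s, \mu_s, X_s) b(s, X_s, u_s)$ and $\sigma_s^2 \partial_{x\mu} v(s, \mu_s, X_s)$, which is precisely where the $\mathcal C_I$ growth bound $\langle |\partial_{x\mu} v|^2, \mu\rangle \le C(1 + [\mu]_2^m)$ combines with Lipschitz continuity of $b$ (yielding uniform moment estimates on $X_s$) to close the argument. A secondary subtlety is that \eqref{eq:opt01} is a pointwise statement in $(x, a)$; converting it into the integrated inequality above relies on $a^*$ being a measurable selection, which is guaranteed by the assumed regularity $a^*:(0,T)\times \mathcal P_2 \times \mathbb R \mapsto \mathbb R$ and is also what makes the feedback $a^*(s,\mu^*_s,\cdot)$ fall under the admissibility class of Definition \ref{def:fb}.
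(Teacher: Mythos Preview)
Your proposal is correct and follows essentially the same verification argument as the paper: apply the chain rule to $v(t,\mu_t)$, rewrite the integrand via $H$, use the master equation to obtain the inequality $v(0,\delta_x)\le J(u)$, and note equality along $(X^*,u^*)$. The only cosmetic difference is that the paper invokes the infimum in \eqref{eq:hjb02} directly to get the key inequality, whereas you first integrate the pointwise condition \eqref{eq:opt01} against $\mu_s$ and then combine it with \eqref{eq:hjb02}; your additional remarks on integrability and measurable selection are not in the paper's proof but are appropriate technical observations.
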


\begin{proof} 
Applying the chain rule to the solution $v$ of the master equation,
for any control $u
\in \mathcal U$, we have
 $$
\begin{array}
 {lll}
v(t, \mu_{t}) &=
 &\disp g(\mu_{T}) - \int_{t}^{T} \partial_{t} v (s, \mu_{s}) ds \\
& &\disp - \int_{t}^{T}
\mathbb E[\partial_{\mu} v (s, \mu_{s}, X_{s}) b(s, X_{s}, u_{s})] ds
 \\& &\disp
  - \frac 1 2 \int_{t}^{T} \mathbb \sigma_{s}^{2}
  \mathbb E[\partial_{x\mu} v (s, \mu_{s}, X_{s})] ds
\end{array}
$$
Since $u\in \mathcal U$ and $v$ solves \eqref{eq:hjb02},
there exists feedback form
$u_{t} = a(t, \mu_{t}, X_{t})$ and  we can write$$
\barray \ad
\{
\langle
H(s, \cdot, \mu_{s}, v, a(s, \mu_{s}, \cdot)), \ \mu_{s}
\rangle \} +
\\
\aad \ \quad \quad \quad\quad
\frac 1 2 \sigma^{2}_s \langle
\partial_{x\mu} v (s, \mu_{s}, \cdot), \mu_{s}
\rangle
+ \partial_{t} v (s, \mu_{s}) \ge 0,
\earray
$$
Therefore, with the definition of $H(\cdot)$, we obtain
$$
\begin{array}
 {lll}
\disp
\mathbb E[\partial_{\mu} v (s, \mu_{s}, X_{s}) b(s, X_{s}, u_{s})] +
 \\
\disp \quad
 \frac 1 2  \mathbb \sigma_{s}^{2}
  \mathbb E[\partial_{x\mu} v (s, \mu_{s}, X_{s})]
 +\partial_{t} v (s, \mu_{s})\ge
   - \mathbb E [\ell(s, X_{s}, u_{s})].
\end{array}
$$
This implies that
$$v (0, \mu_{0}) \le g(\mu_{T}) +\int_{0}^{T} \mathbb E [\ell( t, X_{t}, u_{t})] dt
= J(u
)
$$
for any control $u  \in \mathcal U$ and initial distribution $\mu_{0}$.
The other direction $V^{*}  = J(u^{*}) \le J(u)$ is straightforward.
\end{proof}

The verification theorem has been studied in various
forms for
McKean-Vlasov control problems, for instance,
Proposition 6.32 of \cite{CD18I}.
Proposition \ref{p:veri01} is tailor-made for our calculation
compared to Proposition 6.32 of \cite{CD18I} in that Proposition \ref{p:veri01} characterizes $v(t, \mu)$ while the latter
does the verification of its kernel $V(t, x, \mu)$.
In this sense,  Proposition \ref{p:veri01} can be considered as a generalization of Proposition 5.108 of \cite{CD18I} with general cost structure.
It is also worth mentioning
that the $\inf_{a\in \mathbb R} H(\cdot)$ is used for the optimality condition \eqref{eq:opt01} to simplify our calculation, but
it can be replaced by $\inf_{a\in \mathcal M(\mathbb R)} \langle H(\cdot), \mu\rangle$
for a general purpose.
As mentioned, our main objective in this paper is to obtain explicit solutions of the control problems.

\section{LQG: Fully Observable Case}
\label{s:lqg}
\subsection{Setup}

We consider the following simplified version of
mean-field LQG problem.
 It appears to be more instructive to choose a simpler formulation
so that we can bring out the main feature of the underlying problem.
For
general setup \eqref{eq:con1}, \eqref{eq:con2}, and \eqref{eq:con3}, the coefficients or the functions are given as
\begin{equation}
 \label{eq:para01}
b(t, x, u) = A_{t} x + B_{t} u,
\ \ell(t, x, u) = Q_{t} u^{2},
\end{equation}
and
\begin{equation}
 \label{eq:para02}\barray
g(\mu_{T})\ad = D_{1} [\mu_{T}]_{2}
+ D_{2} [\mu_{T}]_{1}^{2}\\
\ad
= D_{1} \mathbb E[ X_{T}^{2}]
+ D_{2} (\mathbb E[X_{T}])^{2},\earray
\end{equation}
for some continuous and bounded $A_t, B_t, Q_t$ and
constants $D_1, D_2$.
Note that $g$ is polynomial of degree $2$ in $\mu$.

\begin{example} \label{ex:01}
{\rm
 (A standard LQG.)
If
\begin{equation}
 \label{eq:ex01}
A \equiv 0, B\equiv  1, \sigma \equiv 1, Q \equiv 1, D_{1} = 1, D_{2}  = 0,
\end{equation}
then the problem is a standard LQG problem. Note that terminal cost $g(\mu_T) = [\mu_T]_2$ is linear in measure.
In this case, the dynamic programming principle
is applicable
and its HJB can be explicitly solved.
}
\end{example}

\begin{example}
\label{ex:02}
{\rm
This problem is taken from \cite{Yon15}.
Let
\begin{equation}
 \label{eq:ex01}
A \equiv 0, B \equiv  1,\sigma \equiv 1, Q \equiv 1, D_{2} = 1, D_{1}  = 0.
\end{equation}
Note that,
the terminal cost $g(\mu_T) = [\mu_T]_1^2$ is  a
quadratic function in $\mu_{T}$ and  the HJB does not hold.
}
\end{example}

\subsection{Semi-Explicit Solution in Terms of Riccati Equations}

In this section, we solve explicitly the master equation
\eqref{eq:hjb02}-\eqref{eq:tc02} and  apply Proposition \ref{p:veri01}
to  the control problem.
\begin{enumerate}
 \item[(A1)] $Q_{t} >0$ for all $t$.
\end{enumerate}

With  parameters given by \eqref{eq:para01},
the Hamiltonian in the
optimality condition \eqref{eq:opt01}
is  quadratic  in action $a$,
$$
H(t, x, \mu, v, a) =
(A_{t} x + B_{t} a) \partial_{\mu} v (t, \mu, x) +
Q_{t} a^{2}.
$$
Since $Q_{t} >0$,
the infimum over $a \in \mathbb R$ is attained at
$$
a^{*}(t, \mu, x)  = - \frac{ B_{t} \partial_{\mu} v (t, \mu, x)}{2 Q_{t}}
$$
with its minimum
$$
\inf_{a\in\mathbb R} H(t, x, \mu, v, a)  = A_{t} x \partial_{\mu} v - \frac{ B_{t}^{2} }{4 Q_{t}} |\partial_{\mu} v |^{2}.
$$
Therefore,
master equation \eqref{eq:hjb02} becomes
\begin{equation}
 \label{eq:hjb03}
\langle
L_{0} v (t, \mu, \cdot),
\mu
\rangle
+ \partial_{t} v (t, \mu)
= 0,
\end{equation}
where the operator $L_{0}$ is defined by
$$
L_{0} v :=
\Big( A_{t} x \partial_{\mu} v  - \frac{ B_{t}^{2} }{4 Q_{t}} |\partial_{\mu} v |^{2}
+ \frac 1 2 \sigma_t^{2} \partial_{x\mu} v \Big).
$$
Similar to
the traditional approach in LQG, we start with a guess of the value function in
a quadratic function form
$$
v (t, \mu) =
\phi_{1}(t) [\mu]_{2} +
\phi_{2}(t) [\mu]_{1}^{2}
+ \phi_{3}(t).
$$
Then we use the method of un-determined
 ``coefficients'' to determine the three dimensional vector function
$\phi = (\phi_{1}, \phi_{2}, \phi_{3})$.
One can directly write the derivative as
$$
\partial_{\mu} v (t, \mu, x) =
2 \phi_{1} (t) x +
 2 \phi_{2}(t) [\mu]_{1},
$$
which is a polynomial in $x$.
 Moreover, we have
$$
\partial_{t} v (t, \mu) =
\phi'_{1}(t) [\mu]_{2} +
\phi'_{2}(t) [\mu]_{1}^{2}
 + \phi'_{3}(t),
$$
and
$$
\partial_{x \mu} v (t, \mu, x) = 2 \phi_{1} (t).
$$
By plugging  the derivatives in
\eqref{eq:hjb03} and combining the like terms, the
master equation yields that
\begin{equation}
 \label{eq:hjb04}
0 = [\mu]_{2} L_{1} \phi(t) +
[\mu]_{1}^{2} L_{2} \phi(t) +
L_{3} \phi(t),
 \end{equation}
where
$L = [L_{1}, L_{2}, L_{3}]: C^{1}((0, T), \mathbb R^{3})
\mapsto C((0, T), \mathbb R^{3})$
are operators acted on the vector function
$\phi = (\phi_{1}, \phi_{2}, \phi_{3})$ as
$$\begin{array}
 {ll}
L_{1} \phi(t) &=
\displaystyle
\phi_{1}' (t)- \frac{B_{t}^{2}}{Q_{t}} \phi_{1}^{2}(t) + 2 A_{t} \phi_{1}(t), \\
\displaystyle
L_{2} \phi(t) &=
\displaystyle
\phi_{2}'(t) - \frac{B_{t}^{2}}{Q_{t}} \phi_{2}^{2}(t)
- \frac{2 B_{t}^{2}}{Q_{t}} \phi_{1} (t) \phi_{2}(t) + 2A_{t} \phi_{2}(t),\\
\displaystyle
L_{3} \phi(t) &=
\displaystyle \phi_{3}'(t) + \sigma_{t}^{2} \phi_{1}(t).
\end{array}
$$
Since \eqref{eq:hjb04} holds for all $\mu$ together with terminal condition, we have the following system of ODEs in terms of the first-order differential operator $L$
\begin{equation}
 \label{eq:Riccati}
 L\phi (t)= 0, \forall t\in (0, T), \hbox{ with } \phi(T) = (D_{1}, D_{2}, 0).
\end{equation}
Note that $L\phi$ is a linear combination of $\phi'(\cdot)$ and quadratic functions in $\phi$. Such a system $L\phi = 0$ is referred to as a system of  Riccati equations.
One can easily verify the growth condition for $\partial_{x\mu} v$,
and
carry out
verification theorem to conclude the following result.
Furthermore, one can readily verify that the optimal path follows Gaussian process. 

\begin{theorem}
 \label{t:Riccati}
 Suppose $Q_{t} >0$ for all $t$, and
 there exists $\phi \in C^{1}((0, T), \mathbb R^{3})$ for
 Riccati system \eqref{eq:Riccati}. Then the pair $(v, a^{*})$ given by
 $$
v (t, \mu) =
\phi_{1}(t) [\mu]_{2} +
\phi_{2}(t) [\mu]_{1}^{2}
+ \phi_{3}(t),
$$
and
$$
a^{*}(t, \mu, x) =
 - \frac{B_{t}}{Q_{t}} ( \phi_{1} (t) x +
 \phi_{2}(t) [\mu]_{1})
$$
solves the master equation
\eqref{eq:hjb02}-\eqref{eq:tc02} and
the optimality condition \eqref{eq:opt01}.
Moreover, if
$J(u^{*})$ of \eqref{eq:con2}
with parameter sets
\eqref{eq:para01}-\eqref{eq:para02}
is well defined via $(X^{*}, u^{*})$ satisfying
\eqref{eq:con1}-\eqref{eq:con2} and
$$
u^{*}_{t} = a^{*}(t, \mu^{*}_{t}, X^{*}_{t}),
$$
then $(X^{*}, u^{*})$ are optimal trajectory and optimal control, and the optimal value is
$$V^{*} = v (0, \delta_{x}).$$
\end{theorem}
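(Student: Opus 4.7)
The plan is to follow the chain of reductions already sketched in this subsection and then invoke Proposition \ref{p:veri01}. In the first step I verify that the quadratic ansatz solves the master equation. Using the L-derivative rules from Section \ref{s:derivative} (derivative of a $1$-monomial is $\mu$-invariant, product rule for $n$-monomials), the ansatz $v(t,\mu) = \phi_1(t)[\mu]_2 + \phi_2(t)[\mu]_1^2 + \phi_3(t)$ has derivatives $\partial_\mu v(t,\mu,x) = 2\phi_1(t) x + 2\phi_2(t)[\mu]_1$ and $\partial_{x\mu} v(t,\mu,x) = 2\phi_1(t)$, together with the indicated expression for $\partial_t v$. Substituting into $\langle L_0 v(t,\mu,\cdot), \mu\rangle + \partial_t v(t,\mu) = 0$ and using $\langle x^k, \mu\rangle = [\mu]_k$ collects everything into identity \eqref{eq:hjb04}. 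Because the three functionals $\mu \mapsto [\mu]_2$, $\mu \mapsto [\mu]_1^2$, $\mu \mapsto 1$ are linearly independent on $\mathcal P_2$ — tested, for instance, on Dirac masses $\delta_x$ for varying $x$ together with a centered nondegenerate measure that separates $[\mu]_2$ from $[\mu]_1^2$ — identity \eqref{eq:hjb04} for every $\mu$ is equivalent to the componentwise ODE system $L_i \phi(t) = 0$ for $i = 1,2,3$, with the terminal condition $\phi(T) = (D_1, D_2, 0)$ read off from $g(\mu) = D_1 [\mu]_2 + D_2 [\mu]_1^2$. This is precisely the Riccati system \eqref{eq:Riccati} whose solvability is assumed.

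Next I handle the optimality condition. Because $Q_t > 0$, the map $a \mapsto H(t,x,\mu,v,a) = (A_t x + B_t a)\,\partial_\mu v(t,\mu,x) + Q_t a^2$ is strictly convex in $a$ with unique minimizer $-B_t \partial_\mu v(t,\mu,x)/(2Q_t)$; inserting the formula for $\partial_\mu v$ obtained above yields the stated closed form of $a^*(t,\mu,x) = -(B_t/Q_t)(\phi_1(t) x + \phi_2(t)[\mu]_1)$, verifying \eqref{eq:opt01}.

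Finally I apply Proposition \ref{p:veri01}. The continuity of $\partial_t v$, $\partial_\mu v$, $\partial_{x\mu} v$ in $(t,\mu,x)$ is obvious from the ansatz, and the growth requirement $\langle |\partial_{x\mu} v|^2, \mu\rangle \le C(1 + [\mu]_2^m)$ is trivial because $\partial_{x\mu} v = 2\phi_1(t)$ is bounded on $[0,T]$ by continuity of $\phi_1$; hence $v \in \mathcal C_I$. The drift $b(t,x,u) = A_t x + B_t u$ and running cost $\ell(t,x,u) = Q_t u^2$ are Lipschitz in $(t,x)$ on compact sets, as required. Under the standing hypothesis that an admissible pair $(X^*, u^*)$ with the prescribed feedback exists and yields a finite cost $J(u^*)$, Proposition \ref{p:veri01} gives $V^* = J(u^*) = v(0, \delta_x)$. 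The Gaussian character of $X^*$ noted in the remark preceding the theorem follows at once, since the closed-loop drift is affine in $X^*_t$ and in $\mathbb E[X^*_t]$ while the diffusion $\sigma_t$ is deterministic, making \eqref{eq:con1} a linear McKean–Vlasov SDE starting from a Dirac mass. The only genuinely nontrivial step is the linear-independence argument that converts the pointwise-in-$\mu$ PDE identity \eqref{eq:hjb04} into the Riccati system; the remainder is bookkeeping dictated by the quadratic ansatz.
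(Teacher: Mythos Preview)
Your proposal is correct and follows exactly the route the paper takes: the derivation preceding the theorem already computes the L-derivatives of the quadratic ansatz, reduces the master equation to \eqref{eq:hjb04}, and extracts the Riccati system from its validity for all $\mu$, after which the paper simply remarks that one checks the growth condition on $\partial_{x\mu} v$ and invokes the verification theorem. Your write-up fills in those routine checks (strict convexity of $H$ in $a$, boundedness of $\partial_{x\mu} v = 2\phi_1(t)$, the linear-independence argument for $[\mu]_2,\ [\mu]_1^2,\ 1$) with slightly more care than the paper itself, but the logical structure is identical.
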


\subsection{Examples: Explicit Solutions}
We use Theorem \ref{t:Riccati} to solve both traditional LQG Example \ref{ex:01} and mean-field LQG Example \ref{ex:02}. In both cases, the Riccati system \eqref{eq:Riccati} becomes
\begin{equation}
 \label{eq:Riccati02}
\begin{array}
 {ll}
 \phi_{1}' &= \phi_{1}^{2}, \\
 \phi_{2}' &= \phi_{2}^{2} + 2\phi_{1} \phi_{2}, \\
 \phi_{3}' &=  - \phi_{1},
\end{array}
\end{equation}

\subsubsection{Solution of Example \ref{ex:01}}
\label{s:ex01}
This problem can be solved using traditional LQG approach; see \cite{YZ99}.
To use Theorem \ref{t:Riccati}, one can solve \eqref{eq:Riccati02} with terminal condition
$$\phi_{1}(T) = 1, \phi_{2}(T) = \phi_{3}(T) =0.$$
The solution for
this Riccati system can be written as follows. For all  $t \in (0,T)$,
\bea
\ad \phi_{2}(t)  =0, \\
\ad \phi_{1}(t) = \frac{1}{1+T-t}, \\
\ad \phi_{3}(t) = \ln(1 + T -t),
\eea
which yields
the optimal strategy
$$u^{*}_{t} = - \frac{X^{*}_{t}}{1 + T - t},$$
and the value function
$$v(t, \mu) = \frac{[\mu]_{2}}{1+T-t} + \ln (1 + T -t).$$
Thus, the optimal value is
$$V^{*} = v(0, \delta_{x}) =  \frac{x^{2}}{1+ T} + \ln(1 + T).$$

\subsubsection{Solution of Example \ref{ex:02}}
\label{s:ex02}
The solution  given by \cite{Yon15} is attained by decoupling FBSDEs and we recover it using Theorem \ref{t:Riccati}.
We solve the Riccati system  \eqref{eq:Riccati02} but with different terminal conditions
$$\phi_{2}(T) = 1, \phi_{1}(T)  =\phi_{3}(T) =0.$$
The solution for this Riccati system can be written as: For all  $t \in (0,T)$
$$
\phi_{1}(t) =\phi_{3}(t) =0, \hbox{ and }
\phi_{2}(t) = \frac{1}{1+T-t}.
$$
Hence,
the optimal strategy is
$$u^{*}_{t} = - \frac{\mathbb E[X^{*}_{t}]}{1 + T - t}$$
and
the value function is
$$v (t, \mu) = \frac{1}{1+T-t} \Big(\int_{\mathbb R} x \mu(dx) \Big)^{2},$$
which implies the optimal value
$$V^{*} = \frac{x^{2}}{1+ T}.$$

\section{\mv LQG:
Controlled Systems under Partial Observations}
\label{s:cn}

 The following interesting question considered in \cite{WZ19} motivates our second example.
 Given a $\mathbb F = \{\mathcal F_{t}: 0\le t\le T \}$ progressively measurable process $u:[0, T]\times \Omega \mapsto \mathbb R$,
 we say $u\in L_{\mathbb F}^{2}$ if
 $\mathbb E [\int_{0}^{T} |u_{s}|^{2} ds] <\infty.$
A deterministic function $u: [0, T]\mapsto \mathbb R$ is said to be $u\in L^{2}([0, T])$, if
 $\int_{0}^{T} |u_{s}|^{2} ds <\infty$.
 Note that both
 $L_{\mathbb F}^{2}$ and $L^{2}([0, T])$ are both Hilbert spaces.
 We ask the question:
 \begin{itemize}
 \item How does the optimal value of \eqref{eq:con1}-\eqref{eq:con3}
 change if
 $L^2_{\mathbb F}$ is replaced by $L^2([0,T])$?
 \end{itemize}
Roughly speaking, the question can be interpreted as:
What is the
infimum that can be achieved
if the control $u
$ is only allowed to be a deterministic process instead of a random one? It is obvious that the optimal
value achieved in the space of deterministic controls is no less than the value with random controls due to  $L^2([0,T]) \subset L^2_{\mathbb F}$.
In what follows, we consider more general questions.

\subsection{Setup}
Recall that we are working with
$(\Omega, \mathcal F, \mathbb P, \mathbb F)$. Suppose that
 on
this filtered probability space,
  there exist two independent
  Brownian motions $\hat W$ and $\wdt W$, respectively. For simplicity,
  we assume
  $\mathbb F = \mathbb F^{\hat W} \times \mathbb F^{\wdt W}$ and
  $\mathcal F = \mathcal F_{T}^{\hat W} \times \mathcal F_{T}^{\wdt W}$, where $\mathbb F^{\hat W} = (\mathcal F_{t}^{\hat W})_{0\le t\le T}$ and
$\mathbb F^{\wdt W} = (\mathcal F_{t}^{\wdt W})_{0\le t\le T}$ are the filtrations generated by $\hat W$ and $\wdt W$, respectively.

Let $\hat \sigma, \wdt \sigma, \hat \eta, \wdt \eta$ be nonnegative constants satisfying
$$\hat \sigma^{2} + \wdt \sigma^{2} =1,\ \hat \eta^{2} + \wdt \eta^{2} =1.$$
A generic player with its initial state $X_{s}$ at time $s$ has its evolution under control $u$
 in the form of
\begin{equation}
 \label{eq:sde02}
 X_{t} = X_{s} + \int_{s}^{t} u_{r} dr +
 \int_{s}^{t}  \hat \sigma d \hat W_{r}
 + \int_{s}^{t}      \wdt \sigma d\wdt W_{r}.
\end{equation}
For simplicity, we require
$X_{s}$ to have a
normal distribution $\mathcal N(x, s)$ given by
\begin{equation}
 \label{eq:Xs02}
 X_{s} = x +
\hat \eta \hat W_{s} + \wdt \eta \wdt W_{s}.
\end{equation}
The cost functional to be minimized
is given by
\begin{equation}
 \label{eq:J02}
J(u
) =
\mathbb E \Big[ \int_{s}^{T} u_{r}^{2} dr \Big] +
D_{1} [\mu_{T}]_{2}
+ D_{2} [\mu_{T}]_{1}^{2}.
\end{equation}
The distinction of the current problem compared with the previous control problem is the following crucial point. Though the player wants to minimize the cost functional, he or she cannot directly access to the state $X_{t}$ due to the lack of the knowledge for $\wdt W_{t}$ and hence for $W_{t}$. Instead, he or she is up to
design a controller using the prediction process
\begin{equation}
 \label{eq:Xhat02}
\hat X_{t} = \mathbb E[ X_{t} | \mathcal F_{t}^{\hat W}].
\end{equation}
We denote by $\hat \mu_{t}$ the distribution induced by $\hat X_{t}$, i.e.,
$\hat \mu_{t} = \mathbb P\hat X_{t}^{-1}.$ Indeed, $\hat X_t$ can be written as
\begin{equation}
 \label{eq:pred01}
 \hat X_{t} = x + \hat \eta \hat W_{s} +
 \int_{s}^{t} u_{r} dr
 + \int_{s}^{t} \hat \sigma  d \hat W_{r},
\end{equation}

Now we are ready to define the optimal value under partial observation by
\begin{equation}
 \label{eq:v02}
V^{*} = \inf_{u\in \hat{\mathcal U}} J(u
),
\end{equation}
where the control space is defined as
\begin{definition}
 A random process
$u: [0, T]\times \Omega \mapsto \mathbb R$ is  said to be admissible
if $u \in L_{\mathbb F}^2$ together with $(X, J)$ satisfies
\eqref{eq:sde02}-\eqref{eq:J02}  and
\begin{equation}
 \label{eq:con31}
u_{t} = a (t, \hat \mu_{t}, \hat X_{t}) \hbox{ for all } t\in [0, T]
\end{equation}
for some controller $a$.
The collection of all such admissible controls is denoted by
 $\hat{\mathcal U}$. 
\end{definition}

Note that if $u \in L^2([0, T])$, then one can
verify with $a(t, \mu, x) = u_t$ that
$u \in  \hat {\mathcal U}$ by definition.
We remark that if
$s= 0$ and $\hat \sigma = 0$, then $\hat X_t$ of \eqref{eq:pred01} is deterministic,
 $L^2([0, T]) = \hat {\mathcal U}$ holds.

\subsection{Semi-Explicit Solution: Separation Principle}

We use the separation principle in filtering theory. The treatment of the problem is outlined below.

\begin{itemize}
 \item[] Step 1:
 Let $\hat X$ be the prediction of $X$ given by \eqref{eq:Xhat02} and
 $\mathcal E$ and $P$ are the error term and variance of the error term:
$$\mathcal E_{t} = X_{t} - \hat X_{t}, \ P_{t} = \mathbb E [\mathcal E_{t}^{2}].$$
Then, $\mathcal E$, and $P$ satisfy
$$
\mathcal E_{t} = \wdt \eta \wdt W_{s} + \wdt \sigma (\wdt W_{t} - \wdt W_{s}),
$$
and
$$
P_{t} = \wdt \eta^{2} s + \wdt \sigma^{2} (t-s).
$$
Recall that $\hat \mu_{t}$ to denote the distribution of $\hat X_{t}$.
Owing to
$$[\mu_{T}]_{1} = [\hat \mu_{T}]_{1}, \
[\mu_{T}]_{2} = [\hat \mu_{T}]_{2} + P_{T},$$ we can rewrite the cost by
$$
J(u) = \hat J(u) + D_{1} P_{T},
$$
where
\begin{equation}
 \label{eq:cost02}
\hat J(u) =
\mathbb E \Big[ \int_{s}^{T} u_{r}^{2} dr \Big] +
D_{1} [\hat \mu_{T}]_{2}
+ D_{2} [\hat \mu_{T}]_{1}^{2}.
\end{equation}

\item[] Step 2: Since $P_{T}$ is independent to the control $u$,
to minimize $J(u
)$, it is sufficient to minimize $\hat J(u )$.
Next we can apply Theorem \ref{t:Riccati} with parameters
$$
A \equiv 0,\  B \equiv 1, \ \sigma_{t} =  \hat \sigma, \  Q \equiv 1
$$
for
$$\hat V^{*} = \inf_{u  \in \hat {\mathcal U}} \hat J(u )$$
with  $\hat J$ of \eqref{eq:cost02} subject to the process $\hat X$ of \eqref{eq:pred01}. 
This yields the Riccati system
\begin{equation}
 \label{eq:Riccati03}
\begin{array}
 {ll}
  \phi_{1}' = \phi_{1}^{2},  \\
  \phi_{2}' = \phi_{2}^{2} + 2 \phi_{1} \phi_{2}, \\
  \phi_{3}' = - \hat \sigma^{2} \phi_{1}, \\
  \phi_{1}(T) = D_{1}, \ \phi_{2}(T) = D_{2}, \ \phi_{3}(T) = 0.
\end{array}
\end{equation}

\end{itemize}

Now we summarize the result in the following proposition.

\begin{proposition}
 \label{p:cn}
 Suppose $\phi= (\phi_{1}, \phi_{2}, \phi_{3}) \in C^{1}([0, T], \mathbb R^{3})$ solves Riccati system \eqref{eq:Riccati03}. Then,
 the optimal strategy for the control problem \eqref{eq:v02} is
 $$u^{*}_t =
 - \phi_{1}(t) \hat X^{*}_{t} -
 \phi_{2}(t) \mathbb E[\hat X^{*}_{t}], \ \forall t\in (s, T),$$
 and the value is
 \begin{equation}
 \begin{array}
 {ll}
 V^{*}
& = \phi_{1}(s) (x^{2} + \hat \eta^{2} s) +
 \phi_{2}(s) x^{2}\\
 & \quad + \phi_{3}(s) + D_{1}  (\wdt \eta^{2} s + \wdt \sigma^{2} (T - s)).
 \end{array}
 \end{equation}
\end{proposition}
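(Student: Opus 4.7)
The plan is to carry out the separation-principle reduction sketched in Steps 1 and 2 preceding the statement and then invoke Theorem \ref{t:Riccati} on the resulting fully observable problem. First I would decompose $X_t = \hat X_t + \mathcal E_t$; since $\hat X$ already absorbs the drift $\int_s^t u_r dr$ and the $\hat W$-component of the diffusion, the error $\mathcal E_t = \wdt\eta \wdt W_s + \wdt\sigma(\wdt W_t - \wdt W_s)$ is a Gaussian process that does not depend on the control $u$. This control-independence is the heart of the separation step.

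Next, I would use the orthogonality $\mathbb{E}[\hat X_t \mathcal E_t] = 0$ (which follows from $\hat X_t$ being $\mathcal F_t^{\hat W}$-measurable and $\mathcal E_t$ being driven only by $\wdt W$) to obtain the moment identities $[\mu_T]_1 = [\hat\mu_T]_1$ and $[\mu_T]_2 = [\hat\mu_T]_2 + P_T$, with $P_T = \wdt\eta^2 s + \wdt\sigma^2(T-s)$. Plugging into \eqref{eq:J02} gives $J(u) = \hat J(u) + D_1 P_T$, where the second term is a constant. Hence minimizing $J$ over $\hat{\mathcal U}$ is equivalent to minimizing $\hat J$ of \eqref{eq:cost02}, subject only to the dynamics \eqref{eq:pred01} of $\hat X$.

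The reduced problem is a fully observable mean-field LQG for $\hat X$ with parameters $A\equiv 0,\ B\equiv 1,\ \sigma\equiv \hat\sigma,\ Q\equiv 1$, on the horizon $[s,T]$. Applying Theorem \ref{t:Riccati} to this data produces precisely the Riccati system \eqref{eq:Riccati03}, the feedback $a^*(t,\hat\mu,\hat x) = -\phi_1(t)\hat x - \phi_2(t)[\hat\mu]_1$ (which gives the claimed $u^*_t$), and the lifted value $\hat V^* = \phi_1(s)[\hat\mu_s]_2 + \phi_2(s)[\hat\mu_s]_1^2 + \phi_3(s)$. Using \eqref{eq:Xs02} and $\hat X_s = \mathbb E[X_s\mid \mathcal F_s^{\hat W}] = x + \hat\eta \hat W_s$, the moments of the initial distribution $\hat\mu_s$ are $[\hat\mu_s]_1 = x$ and $[\hat\mu_s]_2 = x^2 + \hat\eta^2 s$, and assembling $V^* = \hat V^* + D_1 P_T$ yields the stated expression.

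The main point requiring care is showing that the reduction is genuinely lossless, i.e.\ that the feedback $a^*$ coming from Theorem \ref{t:Riccati} lies in $\hat{\mathcal U}$ and that $\hat X$ actually satisfies \eqref{eq:pred01} under any $u\in\hat{\mathcal U}$. The latter is the standard filtering observation: for any $\mathbb F^{\hat W}$-adapted control, $\hat X_t = \mathbb E[X_t\mid \mathcal F_t^{\hat W}]$ evolves by \eqref{eq:pred01}, and since the proposed $a^*$ is linear in $(\hat X_t, \mathbb E[\hat X_t])$, it is $\mathbb F^{\hat W}$-adapted and square-integrable, hence admissible. Once this admissibility is verified the optimality inequality transfers verbatim from Theorem \ref{t:Riccati}, and I expect no further obstacle beyond book-keeping.
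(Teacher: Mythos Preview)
Your proposal is correct and follows essentially the same route as the paper: the separation step $J(u)=\hat J(u)+D_1 P_T$ is exactly Steps~1--2 in the text preceding the proposition, and the paper's own proof then applies Theorem~\ref{t:Riccati} to the reduced fully observable problem for $\hat X$, obtaining $\hat v^*(t,\hat\mu)=\phi_1[\hat\mu]_2+\phi_2[\hat\mu]_1^2+\phi_3$ and assembling $V^*=\hat v^*(s,\hat\mu_s)+D_1 P_T$. Your explicit computation of $[\hat\mu_s]_1=x$, $[\hat\mu_s]_2=x^2+\hat\eta^2 s$ and your remark on admissibility of $u^*$ are slightly more detailed than the paper but fully consistent with it.
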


\begin{proof}
By Theorem \ref{t:Riccati}, the solution of the master equation
$\hat v^{*}$ and the optimized controller $\hat a^{*}$ associated to
$\hat J$  of \eqref{eq:cost02} and the state
prediction $\hat X$ of \eqref{eq:pred01}  are given by
$$
 \hat v^{*}(t,  \hat \mu) = \phi_{1}(t) [\hat \mu]_{2} +
 \phi_{2}(t) [\hat \mu]_{1}^{2} + \phi_{3}(t),
$$
 and
 $$\hat a^{*} (t, \hat \mu, \hat x) = - \phi_{1}(t) \hat x -
 \phi_{2}(t) [\hat \mu]_{1}.$$
 Moreover, the strategy
 \bea u^{*}_t\ad = \hat a^{*}(t, \hat \mu_{t}, \hat X^{*}_{t})\\
 \ad =
 - \phi_{1}(t) \hat X^{*}_{t} - \phi_{2}(t) \mathbb E[\hat X^{*}_{t}], \ \forall t\in (s, T)\eea
 makes $\hat X^*$ of \eqref{eq:pred01} well defined as a Gaussian process. So, $u^{*} $ given above is optimal and the corresponding value for \eqref{eq:cost02} is given by $\hat V^{*} = \hat v^{*}(s, \hat \mu_{s})$, and finally the value of \eqref{eq:v02} is
 $$  V^{*} = \hat V^{*} + D_{1} P_{T},$$
 which yields the desired conclusion.
  \end{proof}

\subsection{Two Examples}
\begin{example}
\label{ex:03} (linear terminal cost in measure)
With $(D_1, D_2) = (1, 0)$, we
solve the optimization of \eqref{eq:v02} defined through partially observed system
\eqref{eq:sde02}, \eqref{eq:Xs02}, \eqref{eq:J02}. Solving the Riccati system \eqref{eq:Riccati03}, we have
\bea \ad \phi_{1}(t) = \frac{1}{1+T-t}, \\
\ad \phi_{2} \equiv 0, \\
\ad \phi_{3}(t) = \hat \sigma^{2} \ln (1+ T-t).\eea
Then,
 the optimal strategy is
 $$u^{*}_t =
 - \frac{\hat X^{*}_{t}}{1+T-t}, \ \forall t\in (s, T)$$
 and the value is
 $$
 \begin{array}
 {ll}
 V^{*} & =  \frac{1}{1+T-s} (x^{2} + \hat \eta^{2} s)
 \\
 & \hspace{.2in}
  + \hat \sigma^{2} \ln (1+ T-s)
 + \wdt \eta^{2} s + \wdt \sigma^{2} (T - s).
 \end{array}
 $$
 It is noted that the above value with $s=0$ is
 $$V^{*}\Big|_{s = 0} = \frac {x^{2}} {1+T} + \hat \sigma^{2} \ln (1+T)
  + \wdt \sigma^{2} T,$$
Moreover, if $\hat \sigma = 1$ and $\wdt \sigma = 0$, then the above value recovers  the solution of fully observable traditional LQG;
see Example \ref{ex:01} in Section \ref{s:ex01}.
\end{example}

\begin{example}
\label{ex:04} (quadratic terminal cost in measure)
With $(D_1, D_2) = (0, 1)$,
we solve the optimization of \eqref{eq:v02} defined through \eqref{eq:sde02}, \eqref{eq:Xs02}, \eqref{eq:J02}. Solving the Riccati system \eqref{eq:Riccati03}, we have
\bea \ad\phi_{2}(t) = \frac{1}{1+T-t}, \\  \ad\phi_{1} \equiv 0, \\ \ad\phi_{3} \equiv 0.\eea
Then, the optimal strategy is given by
 $$u^{*}_t =
  - \frac{\mathbb E[\hat X^{*}_{t}]}{1+T-t}, \ \forall t\in (s, T)$$
 and the value is
 $$
V^{*} =  \frac{x^{2}}{1+T-s}.
 $$
 Note that
 the above value with $s = 0$ and $\hat \sigma = \hat  \eta = 1$ recovers
 the solution of fully observable mean field LQG; see
 Example \ref{ex:02} of Section \ref{s:ex02}.
 Interestingly, the value is invariant with respect to the observability, i.e., $\partial_{\hat \sigma} V^{*} = 0$.
\end{example}

The computations above both agree with our intuition;
the value
is non-increasing with respect to $\hat \sigma$.
Interestingly, as $\hat \sigma$ increases,
the value is strictly decreasing for  Example \ref{ex:03},
while stays constant for Example \ref{ex:04}.
With that being said, observation of
the noise does not help in minimization for the proper quadratic terminal cost.

\section{Summary}\label{s:sum}
This paper focuses on
mean-field LQGs
with some examples.
These simplified frameworks make it possible
to obtain some explicit solutions that provide us with valuable insight
to a potentially complicated system.
For instance,
Proposition \ref{p:cn} along with Example \ref{ex:03} and \ref{ex:04} clearly
indicates that the value function of a partially observable system depends
not only on the distribution $\mu_s$ of the initial state $X_s$, but on its
joint distribution of $(\hat X_s, X_s- \hat X_s)$
in the observable probability space and its orthogonal probability space. Thus, to characterize the value function in the form of $V(t, \mu)$ depending only on the time and initial distribution  is not sufficient (cf. (4.7) in \cite{PW17}).

The result can be extended to
multidimensional problems with
no essential difficulty but
 more complex notation. For instance, we consider the process $X_t \in \mathbb R^d$ and the cost
 given by
$$\begin{array}
{ll}
dX_t =(A_t X_t + B_t u_t) dt + \sigma_t dW_t,
\\
J(u) =\mathbb{E}\Big[ \int_{0}^{T}  u_t^\top Q_t u_t dt \Big] + g(\mu_{T})
\end{array}
$$
with
$$
g(\mu_{T})=  \int_{\mathbb{R}^{d}} x^\top D_{1}x \mu_{T}(dx)+
[\mu_T]_1^\top D_{2} [\mu_{T}]_{1}.
$$
Solving the master equation yields the following Riccati system:
$$
\begin{array}
{ll}
\phi^{'}_{1}(t)-\phi^{\top}_{1}(t)B_{t}Q_{t}^{-1}B_{t}^{\top}\phi_{1}(t)+2A_{t}^{\top}\phi_{1} (t)=0,
\\
\phi^{'}_{2}(t)-2\phi^{\top}_{2}(t)B_{t}Q_{t}^{-1}B_{t}^{\top}\phi_{1}(t)
-
\\ \hspace{.5in}
\phi^{\top}_{2}(t)B_{t}Q_{t}^{-1}B_{t}^{\top}\phi_{2}(t)+2A_{t}^{\top}\phi_{2}(t)=0,
\\
\phi^{'}_{3}(t)+tr[\sigma_{t}\sigma_{t}^{\top}\phi_{1}(t)]=0,
\end{array}
$$
with the terminal condition
$$
\phi_{1}(T)=D_{1}, \phi_{2}(T)=D_{2}, \phi_{3}(T)=0.
$$

More challenging generalization is to consider
more general
cost.
For instance, going back to 1-d problem  \eqref{eq:con1}, \eqref{eq:con2},  \eqref{eq:con3}, \eqref{eq:para01} with terminal cost
$$g(\mu_T) = \mathbb E[X_T^2] +
 (\mathbb E[\Psi(X_{T})])^{2},$$
one shall solve the master equation with a guess
$$v = \phi_1 \langle \psi, \mu\rangle^2 + \phi_2 \langle x^2, \mu\rangle + \phi_3 +
\phi_4 \langle \psi, \mu\rangle \langle x, \mu \rangle + \phi_5 \langle x, \mu\rangle^2.
$$


\balance

\def\cprime{$'$}

\end{document}